	\DeclareSymbolFont{cyrletters}{OT2}{wncyr}{m}{n}
	\DeclareMathSymbol{\Sha}{\mathalpha}{cyrletters}{"58}
	\definecolor{darkgreen}{rgb}{0,0.5,0}
	\definecolor{darkblue}{rgb}{0,0,0.8}
\renewcommand{\epsilon}{\varepsilon}
\renewcommand{\phi}{\varphi}
\renewcommand{\theta}{\vartheta}
\newcommand{\Q}{{\mathbf{Q}}}
\newcommand{\Z}{{\mathbf{Z}}}
\newcommand{\F}{{\mathbf{F}}}
\newcommand{\Bcal}{{\mathscr{B}}}
\renewcommand{\P}{{\mathbf{P}}}
\newcommand{\Jac}{\operatorname{Jac}}
\newcommand{\Ocal}{{\mathcal{O}}}
\newcommand{\Ccal}{{\mathscr{C}}}
\newcommand{\Spec}{\operatorname{Spec}}
\newcommand{\Acal}{\mathscr{A}}
\newcommand{\NS}{\operatorname{NS}}
\newcommand{\Tr}{\operatorname{Tr}}
\newcommand{\rk}{\operatorname{rk}}
\newcommand{\Pic}{\operatorname{Pic}}
\newcommand{\hhat}{{\hat{h}}}
\newcommand{\isoto}{\xrightarrow{\sim}}
\newcommand{\inj}{\hookrightarrow}
\newcommand{\surj}{\twoheadrightarrow}
\newcommand{\defeq}{\colonequals}
\theoremstyle{plain}
\newtheorem{theorem}{Theorem}[section]
\newtheorem{lemma}[theorem]{Lemma}
\newtheorem{corollary}[theorem]{Corollary}
\theoremstyle{definition}
\newtheorem{definition}[theorem]{Definition}
\theoremstyle{remark}
\newtheorem{remark}[theorem]{Remark}
\newtheorem{notation}[theorem]{Notation}
\crefname{theorem}{Theorem}{Theorems}
\crefname{lemma}{Lemma}{Lemmata}
\crefname{corollary}{Corollary}{Corollaries}
\crefname{proposition}{Proposition}{Propositions}
\crefname{definition}{Definition}{Definitions}
\crefname{conjecture}{Conjecture}{Conjectures}
\crefname{question}{Question}{Questions}
\crefname{example}{Example}{Examples}
\crefname{algorithm}{Algorithm}{Algorithms}
\crefname{remark}{Remark}{Remarks}
\crefname{assumption}{Assumption}{Assumptions}
\begin{document}
	\title{Specialization of Mordell-Weil ranks\\ of abelian schemes over surfaces to curves}
	
	\author[Keller, T.]{Timo Keller}
	\address{Lehrstuhl Mathematik~II (Computeralgebra)\\Universität Bayreuth\\Universitätsstraße 30\\95440 Bayreuth, Germany} 
	\curraddr{Leibniz Universität Hannover, Institut für Algebra, Zahlentheorie und Diskrete Mathematik, Welfengarten 1, 30167 Hannover, Germany}
	\email{\href{mailto:keller@math.uni-hannover.de}{keller@math.uni-hannover.de}}
	\urladdr{\url{https://www.timo-keller.de}}
	
	\thanks{The author was partially supported by the Deutsche Forschungsgemeinschaft (DFG), Projektnummer STO 299/18-1, AOBJ: 667349 while working on this article.}
	
	\date{\today}
	
	\begin{abstract}
	Using the Shioda-Tate theorem and an adaptation of Silverman's specialization theorem, we reduce the specialization of Mordell-Weil ranks for abelian varieties over fields finitely generated over infinite finitely generated fields $k$ to the the specialization theorem for N\'eron-Severi ranks recently proved by Ambrosi in positive characteristic. More precisely, we prove that after a blow-up of the base surface $S$, for all vertical curves $S_x$ of a fibration $S \to U \subseteq \P^1_k$ with $x$ from the complement of a sparse subset of $|U|$, the Mordell-Weil rank of an abelian scheme over $S$ stays the same when restricted to $S_x$.
	
	\emph{Keywords:} specialization of Mordell-Weil ranks; abelian schemes over higher-dimensional bases; specialization of N\'eron-Severi groups; rational points.
	
	\emph{2020 Mathematics} subject classification: 11G10 (primary); 11G05; 11G35
	\end{abstract}
	
	\maketitle
	
	\pagestyle{plain}
	
	\section{Introduction}
	
	\subsection{Motivation and overview}
	
	Silverman's specialization theorem~\cite{Silverman1983} states that for a family $\Acal$ of abelian varieties fibered over a curve $C$ over a number field $K$, the specialization homomorphism from the generic fiber $\Acal(K(C))$ to a special fiber $\Acal(\kappa(x))$ is injective for $x \in C(\overline{K})$ outside a subset of bounded height. (The specialization theorem has been extended to finitely generated fields in characteristic $0$ by~\cite{Wazir2006}.) In particular, if one can construct such a family with generic Mordell-Weil rank $r$, one gets infinitely many specializations over number fields of rank at least $r$. If moreover $C$ has infinitely many $K$-points, one even gets infinitely many abelian varieties over $K$ of rank at least $r$. This has been used to construct elliptic curves of high rank over $\Q$, see for example~\cite{Shioda1991}.
	
	However, the rank can be strictly larger. For example, an open problem seems to be to construct infinitely many non-isomorphic elliptic curves over $\Q$ with rank~$2$. In this article, we prove that one gets infinitely many specializations with Mordell-Weil rank \emph{equal} to the generic Mordell-Weil rank. However, our situation is slightly different: We do not specialize from a family over a curve to its closed points, but from a family over a surface to curves on that surface. More precisely, our main theorem is:
	
	\begin{theorem}[\cref{Mordell-Weil specialization}] \label[theorem]{main theorem}
		Let $k$ be an infinite finitely generated field. Let $K|k$ be a finitely generated regular field extension and $Sk$ a smooth separated (not necessarily proper) geometrically connected surface over $k$ with function field $K$. Let $A$ be an abelian variety over $K$ with $\Tr_{K|k}(A) = 0$ or $\Tr_{K|k}(A)(k)$ finite, and $\Acal$ an extension of $A$ to an abelian scheme over a dense open subscheme $U$ of $S$.
		
		Then for infinitely many curves $C$ on $U$, one has a specialization isomorphism
		\[
			A(K) \otimes \Q \isoto \Acal(C) \otimes \Q
		\]
		of rationalized Mordell-Weil groups. More precisely, for all fibrations of $S$ over a curve $U$, there is a $d \geq 1$ such that one can take infinitely many vertical curves $S_x$ for $x \in |U|$ of degree $[\kappa(x) : k] \leq d$.
	\end{theorem}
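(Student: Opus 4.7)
The proof combines three main ingredients: a Shioda--Tate type decomposition relating the Mordell--Weil rank of $\Acal$ to Néron--Severi ranks of a smooth projective model of its total space, Ambrosi's specialization theorem for Néron--Severi ranks in positive characteristic (and its standard analogue in characteristic zero), and an adaptation of the Silverman--Wazir specialization theorem providing injectivity.

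After the permitted blow-up, I would first replace $S$ by a smooth projective surface $\bar S$ over $k$ carrying a fibration $\bar S \to \bar U$ onto a smooth projective curve $\bar U \supseteq U$ in $\P^1_k$, chosen so that the general fibre $S_x$ is a smooth projective curve meeting the open subscheme of $S$ over which $\Acal$ extends. Passing to a smooth projective compactification $\widetilde{\Acal}$ of the Néron model of $\Acal$ over $\bar S$, a Shioda--Tate/Lang--Néron type identity reads
\[
  \rk \Acal(S) \;=\; \rk \NS(\widetilde{\Acal}) - \rk \NS(\bar S) - c,
\]
where $c$ collects the corrections coming from the reducible fibres of $\widetilde{\Acal} \to \bar S$ and from $\Tr_{K|k}(A)(k)$; by hypothesis the trace contribution is finite. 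An analogous identity of the same structural form applies to the restriction $\widetilde{\Acal}|_{S_x}$ and computes $\rk \Acal(S_x)$.

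Next I would apply Ambrosi's Néron--Severi specialization theorem to the fibrations $\widetilde{\Acal} \to \bar U$ and $\bar S \to \bar U$ to conclude that, for $x$ outside a sparse subset of $|\bar U|$ of bounded degree $[\kappa(x):k] \leq d$, one has $\rk \NS(\widetilde{\Acal}|_{S_x}) = \rk \NS(\widetilde{\Acal}_\eta)$ and $\rk \NS(S_x) = \rk \NS(\bar S_\eta)$, where $\eta$ denotes the generic point of $\bar U$. Together with uniform control of the correction terms $c$ over the good locus (the bad fibres of $\widetilde{\Acal} \to \bar S$ meet a general vertical curve $S_x$ transversally and contribute the same combinatorial type), Shioda--Tate then yields the equality $\rk \Acal(S) = \rk \Acal(S_x)$ outside a sparse subset of $|\bar U|$ of bounded degree.

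Finally, an adaptation of the Silverman--Wazir specialization theorem~\cite{Silverman1983,Wazir2006} applied to the family $\Acal$ restricted along $S \to U$ supplies the injectivity of the specialization map $\Acal(S) \otimes \Q \to \Acal(S_x) \otimes \Q$ for $x$ outside a further sparse subset; the height machinery in \cite{Wazir2006} over finitely generated fields is what allows the passage beyond the number-field case. An injection of finite-dimensional $\Q$-vector spaces of equal dimension is an isomorphism, giving the theorem. The main obstacle I foresee is the Shioda--Tate step: carefully matching the bad-fibre and trace corrections between the surface level and the special curves so that Ambrosi's NS specialization translates cleanly into MW specialization; the hypothesis on $\Tr_{K|k}(A)$ is precisely what is needed to close this gap.
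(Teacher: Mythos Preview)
Your overall strategy---Shioda--Tate plus Néron--Severi specialization plus Silverman--Wazir injectivity---matches the paper's, but the implementation diverges at a crucial point and runs into a genuine obstacle.

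You propose to apply Shioda--Tate directly to a smooth projective compactification $\widetilde{\Acal}$ of the total space of $\Acal$ over $\bar S$. This variety has dimension $\dim A + 2 \geq 3$, and in positive characteristic resolution of singularities is not known in that range; so the very existence of a smooth $\widetilde{\Acal}$ is unavailable. Since Ambrosi's theorem (as cited) needs a smooth proper family over $U$, you cannot feed $\widetilde{\Acal}\to U$ into it. A second, related issue is that the Shioda--Tate identity you write, $\rk \Acal(S) = \rk \NS(\widetilde{\Acal}) - \rk \NS(\bar S) - c$, is a higher-dimensional statement (abelian scheme of arbitrary relative dimension over a surface, or over a curve after restriction) that is not the classical fibered-surface formula and would itself require proof; the ``uniform control of the correction terms $c$'' you flag is genuinely delicate here, because the boundary of a compactified Néron model need not specialize well.

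The paper sidesteps both problems by first reducing to Jacobians: choose a smooth projective curve $C/K$ with a surjection $\Pic^0_{C/K}\twoheadrightarrow A$, and by Poincaré reducibility split $\Pic^0_{C/K}$ up to isogeny as $A\times_K B$. Spreading $C$ out to a smooth relative curve $\Ccal\to S$ keeps all total spaces two-dimensional over the base curve $U$, so only the classical Shioda--Tate formula for fibered \emph{surfaces} is needed, smooth compactifications exist, and the error terms vanish because $\Ccal\to S$ is smooth. Ambrosi/Cadoret then gives $\rho(\Ccal_x)=\rho(\Ccal_{k(U)})$ for $x$ outside a sparse set, hence $\rk \Pic^0_{\Ccal/S}(S)\geq \rk \Pic^0_{\Ccal/S}(S_x)$. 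Combining this with Silverman--Wazir for each factor $\Acal$ and $\Bcal$ forces equality factor by factor, and the injectivity plus rank equality gives the isomorphism. The Jacobian trick is the missing idea in your proposal.
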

	
	Our proof combines the specialization theorem for N\'eron-Severi ranks recently proved by Ambrosi~\cite[Corollary~1.7.1.3]{ambrosi2018specialization} with the Shioda-Tate formula from~\cite{Gordon}. We then adapt Silverman's specialization theorem to prove~\cref{main theorem}.
	
	\subsection{Structure of our work}
	In section~\ref{sec:specialization-of-mordell-weil-groups-the-rank-does-not-drop} we observe that Silverman's specialization theorem~\cite{Silverman1983} holds in our setting, too, because we have the usual height machine from~\cite{Conrad-trace}. Section~\ref{sec:specialization-of-mordell-weil-groups} is the core of this article; here, we prove that after blowing up the base surface $S$, there is a proper generically smooth fibration $S \to \P^1$ (\cref{fibered surface}) such that infinitely many vertical curves $S_x$ have the property that $\Acal(S) \otimes \Q \to \Acal(S_x) \otimes \Q$ is an isomorphism, first for Jacobians in~\cref{Mordell-Weil specialization rank does not grow} and then in the general case in~\cref{Mordell-Weil specialization}.
	
	\subsection{Relation to other work}
	We originally intended to use~\cref{main theorem} to verify the missing hypothesis in~\cite[Theorem~7.0.3]{Keller-BSD}. However, the reduction of the BSD conjecture for all abelian varieties over function fields of any transcendence degree over $k$ to that of $1$-dimensional function fields is already contained in~\cite[Corollary~5.4]{geisser_2019}.
	
	The conjecture of Birch and Swinnerton-Dyer (BSD) has has been extended to function field of arbitrary transcendence degree over finite and finitely generated fields of positive characteristic in~\cite{Keller-BSD} and~\cite{Qin2}, respectively.	As the conjecture is more accessible when the (absolute or relative over a finitely generated field) transcendence degree of $K$ is $1$, it is desirable to investigate the behavior of BSD for $A/K$ when $K$ is specialized to a finitely generated field of lower transcendence degree. More geometrically, this corresponds to restricting the abelian scheme over a surface model of the function field to curves on this surface.
	
	In~\cite[§\,7]{Keller-BSD} we used a Lefschetz hyperplane argument to prove that the BSD conjecture for all abelian schemes $\Acal$ over all \emph{surfaces} $S$ over finite fields implies the BSD conjecture for all abelian schemes over all bases of dimension greater than two. The crucial property used was that the rank of the restriction of the Mordell-Weil group $\Acal(X)$ to an ample smooth irreducible hypersurface section $Y$ of $X$ of dimension $> 2$ remains invariant. However, the reduction to the case where the base is a \emph{curve} could not be completed because we could not construct curves $C$ on the surface $S$ such that the ranks of $\Acal(S)$ and $\Acal(C)$ are equal. In the present article, we fill this gap by showing that, possibly after blowing up $S$, there are infinitely many such curves. Note that the truth of the BSD conjecture is invariant under blow-ups.

\begin{notation}
	The set of closed points of a scheme $X$ is denoted by $|X|$. For a point $v$ of a scheme, we denote its residue field by $\kappa(v)$.
\end{notation}

\section{The rank does not drop outside a set of bounded height}\label{sec:specialization-of-mordell-weil-groups-the-rank-does-not-drop}

For the Shioda-Tate formula~\cref{Shioda-Tate}, we need the notion of a fibered surface from~\cite[2.1]{Gordon}; as we also need fibrations into curves over bases of dimension $\leq 2$, we introduce the notion of a relative curve:

\begin{definition}\label[definition]{def:fibered surface}
	A \emph{relative curve} $\Ccal \to S$ over a field $k$ consists of the following data: an irreducible variety $S$ over $k$, a proper smooth variety $\Ccal$ over $k$ and a proper flat morphism $\Ccal \to S$ cohomologically flat in dimension $0$ with fibers of dimension~$1$ and smooth projective geometrically irreducible generic fiber. We call a relative curve \emph{smooth} if the morphism $\Ccal \to S$ is.
	
	If $\dim{S} = 1$ and $S$ is smooth, projective, and geometrically irreducible, we call such a relative curve a \emph{fibered surface}.
\end{definition}

\begin{remark}
	That $\pi: \Ccal \to S$ is cohomologically flat in dimension $0$ means that one has $\Ocal_S \isoto \pi_*\Ocal_\Ccal$ universally. If the proper flat morphism $\pi$ admits a section, one can omit the word \enquote*{universally}. See the remarks after the definition in~\cite[2.1]{Gordon}.
\end{remark}

We use the $K|k$-trace of an abelian variety $A$ over $K$, see~\cite{Conrad-trace} and~\cite[4.2]{Gordon}:

\begin{definition}
	Let $K|k$ be a primary field extensions and $A$ an abelian variety over $K$. A \emph{$K|k$-trace} of $A$, denoted $\Tr_{K|k}(A)$, is a final object in the category of pairs $(B,f)$ where $B$ is an abelian variety over $k$ and $f: B \times_k K \to A$ is a morphism of abelian varieties.
\end{definition}

\begin{remark}
	The $K|k$-trace exists by~\cite[Theorem~6.2]{Conrad-trace}. It somewhat captures the constant part of $A$, i.e., the part coming from $k$, see~\cite[Example~2.2]{Conrad-trace}.
\end{remark}

\begin{theorem}
	Let $\Ccal \to S$ be a fibered surface with generic fiber $C \to \Spec{K}$. Assume that the field extension $K|k$ is regular (primary in~\cite[4.2]{Gordon}). Then the $K|k$-trace of $A \defeq \Pic^0_{C/K}$ is an abelian variety over $k$ purely inseparably isogenous to $\Pic^0_{\Ccal/k}/\Pic^0_{S/k}$.
\end{theorem}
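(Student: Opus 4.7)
The plan is to construct a canonical $k$-morphism $\bar\phi\colon B \to \Tr_{K|k}(A)$ from the quotient $B \defeq \Pic^0_{\Ccal/k}/\pi^*\Pic^0_{S/k}$ and show that it is a purely inseparable isogeny of abelian varieties over $k$. The first task is to verify that $B$ is actually an abelian variety: the hypothesis $\Ocal_S \isoto \pi_*\Ocal_\Ccal$ (cohomological flatness in dimension $0$) together with the projection formula shows that $\pi^*\colon \Pic^0_{S/k} \to \Pic^0_{\Ccal/k}$ has trivial kernel, so it is a closed immersion of abelian varieties over $k$, and $B$ is an abelian variety over $k$.

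To construct the map, I would pull back along the inclusion $j\colon C \inj \Ccal$ of the generic fiber, obtaining $j^*\colon \Pic^0_{\Ccal/k} \to \Pic^0_{C/K} = A$ after base change to $K$. Because $C$ maps to the generic point of $S$, the pullback via $C \to S$ of any line bundle on $S$ is trivial, so $j^* \circ \pi^* = 0$ and we obtain a $K$-morphism $\phi\colon B_K \to A$. The universal property of the $K|k$-trace then produces the desired $\bar\phi\colon B \to \Tr_{K|k}(A)$ over $k$. To conclude that $\bar\phi$ is a purely inseparable isogeny, I would compare dimensions and check surjectivity together with finiteness of the kernel: surjectivity uses that every $k$-rational abelian subvariety of $A$ arises from divisor classes on $\Ccal$, a consequence of Chow's image/trace duality in~\cite[\S\,6]{Conrad-trace}, while finiteness of the kernel is the Shioda-Tate phenomenon that horizontal classes in $\Pic^0_{\Ccal/k}$ whose restriction to the generic fiber is algebraically trivial differ from a pullback from $S$ by at most an infinitesimal summand.

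The main obstacle I anticipate is the positive-characteristic bookkeeping: because $K|k$ is only assumed primary, purely inseparable isogenies appear throughout, and vertical components over non-reduced fibers can hide infinitesimal kernels in the Picard schemes. In particular one must work with the fppf Picard functor rather than the étale quotient to keep track of infinitesimal phenomena. These subtleties are precisely the content of~\cite[Proposition~4.2]{Gordon}, to which I would ultimately defer for the detailed verification once the canonical morphism $\bar\phi$ has been set up as above.
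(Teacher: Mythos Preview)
Your proposal is correct in spirit and, in fact, gives considerably more detail than the paper itself: the paper's entire proof is the single line ``See~\cite[Proposition~4.4]{Gordon}.'' Your outline---build $\bar\phi$ by pulling back to the generic fibre, factor through the quotient using $j^*\circ\pi^*=0$, invoke the universal property of the trace, then argue that $\bar\phi$ is a purely inseparable isogeny---is precisely the shape of Gordon's argument, and you rightly flag the positive-characteristic infinitesimal bookkeeping as the delicate point before deferring to Gordon. One small correction: the result you want in Gordon is Proposition~4.4, not~4.2; and note that your surjectivity sketch (``every $k$-rational abelian subvariety of $A$ arises from divisor classes on $\Ccal$'') is more heuristic than the actual mechanism Gordon uses, but since you explicitly defer to that reference for the verification this is not a gap.
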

\begin{proof}
	See~\cite[Proposition~4.4]{Gordon}.
\end{proof}

For an abelian scheme $\Acal \to S$, we call the abelian group $\Acal(S)$ of its sections its \emph{Mordell-Weil group}. The following is a generalization of the Mordell-Weil theorem for abelian varieties over global fields.

\begin{theorem}[Mordell-Weil-N\'eron-Lang]\label[theorem]{Mordell-Weil-Neron-Lang}
	Let $K|k$ be a finitely generated regular field extension and $A$ an abelian variety over $K$. Then \[A(K)/\Tr_{K|k}(A)(k)\] is a finitely generated abelian group.
\end{theorem}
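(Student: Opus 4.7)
The plan is to deploy the classical Lang-Néron descent, adapted to finitely generated base fields as developed in~\cite{Conrad-trace}. I proceed by induction on $d \defeq \operatorname{trdeg}(K|k)$. In the base case $d = 0$, regularity of $K|k$ forces $K = k$, whence $\Tr_{K|k}(A) = A$ and the quotient vanishes; finite generation of $A(k)$ itself is the classical Mordell-Weil theorem, obtained by the same induction applied to $k$ over its prime field, the global field case being due to Mordell, Weil, and Néron.

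For $d \geq 1$, the two pillars are a weak Mordell-Weil theorem and a Néron-Tate height. First, I would show that $A(K)/mA(K)$ is finite for some integer $m \geq 2$ coprime to the residue characteristics: after base change to a finite separable extension trivializing $A[m]$, Kummer theory embeds this quotient into a Selmer-type subgroup of $\H^1(K, A[m])$, and finiteness reduces, via a suitable smooth model of $K$ over $k$, to the finite generation of units and Picard groups of finitely generated rings, which is itself available by induction on $d$. Second, I would construct a Néron-Tate canonical height $\hat h\colon A(K)\to \R$ from a symmetric ample line bundle on $A$ via the height machine in~\cite{Conrad-trace}, and identify
\[
	\ker(\hat h) = A(K)_{\tors} + \Tr_{K|k}(A)(k);
\]
the inclusion $\supseteq$ is functorial and uses the vanishing of the height on constant sections, while $\subseteq$ is the deep direction and relies on Chow's structural characterization of the trace: any $K$-point of canonical height zero descends, modulo torsion, to a $k$-point of the trace.

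A classical Fermat infinite descent then shows that $A(K)/\ker(\hat h)$ is finitely generated; combined with the finiteness of $A(K)_{\tors}$ for abelian varieties over finitely generated fields, this yields finite generation of $A(K)/\Tr_{K|k}(A)(k)$. The main obstacle is the identification of $\ker(\hat h)$ with the trace modulo torsion, which requires the full Chow trace theory from~\cite{Conrad-trace}; the remaining steps are a routine adaptation of the number-field proof of Mordell-Weil, where one uses that quasi-quadraticity of $\hat h$ and finiteness of $A(K)/mA(K)$ together imply finite generation modulo the kernel.
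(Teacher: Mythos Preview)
The paper does not prove this statement at all: its entire proof is the single line ``See~\cite[Theorem~7.1]{Conrad-trace}.'' Your proposal is a faithful outline of precisely that Lang--N\'eron argument as reworked by Conrad, so in substance you and the paper agree completely; you simply unpack the citation.

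One small caveat on your sketch: the theorem as stated places no finiteness hypothesis on $k$, so you cannot invoke ``finiteness of $A(K)_{\tors}$ for abelian varieties over finitely generated fields'' in general. What Conrad actually proves (and what suffices) is that $\ker(\hat h)/\Tr_{K|k}(A)(k)$ is finite, via the Chow regularity theorem characterizing the trace; this absorbs the torsion issue without assuming $k$ finitely generated over its prime field. In the paper's setting $k$ \emph{is} finitely generated, so your version also goes through there, but for the statement as written you should phrase the height-kernel step in the relative form.
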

\begin{proof}
	See~\cite[Theorem~7.1]{Conrad-trace}.
\end{proof}

As in~\cite[text before Proposition~1]{Wazir2006}, the \emph{height} of an effective divisor on a smooth projective variety with respect to an embedding in projective space is its degree as a closed subvariety of projective space.
\begin{theorem}\label[theorem]{restriction to closed subscheme injective}
	Let $K|k$ be a finitely generated regular field extension with smooth projective model $S$ over $k$, $A$ an abelian variety over $K$ with $\Tr_{K|k}(A) = 0$ or $\Tr_{K|k}(A)(k)$ finite (e.g., $k$ finite), and $\Acal$ an extension of $A$ to an abelian scheme over a dense open subscheme $U$ of $S$. For all $M > 0$, all but finitely many curves $C \inj U$ of degree $\leq M$ have the property that the specialization morphism $A(K) \otimes \Q \to \Acal(C) \otimes \Q$ is injective. In particular,
	\begin{equation} \label{eq:Silverman specialization}
		\rk A(K) \leq \rk \Acal(C).
	\end{equation}
\end{theorem}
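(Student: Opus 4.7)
The plan is to adapt Silverman's original argument~\cite{Silverman1983}, replacing his height machinery over a number field by the N\'eron--Tate height machine over the finitely generated field $K$ supplied by~\cite{Conrad-trace}. By~\cref{Mordell-Weil-Neron-Lang} and the hypothesis on $\Tr_{K|k}(A)(k)$, the group $A(K)$ is finitely generated. Since $\mathcal{A}\to U$ is an abelian scheme over a smooth base containing the generic point $\Spec K$, sections extend uniquely, so $A(K)=\mathcal{A}(U)$ and the specialization homomorphism is just restriction $\mathcal{A}(U)\to\mathcal{A}(C)$. Fix a symmetric ample line bundle $L$ on $A/K$ and extend it to a line bundle $\mathcal{L}$ on $\mathcal{A}/U$; then~\cite{Conrad-trace} provides a canonical N\'eron--Tate height $\hat h_L\colon A(K)\to\R_{\geq 0}$ which, under the hypothesis on $\Tr_{K|k}(A)(k)$, is positive definite modulo $A(K)_{\tors}$. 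The kernel of $A(K)\otimes\Q\to\mathcal{A}(C)\otimes\Q$ is nontrivial if and only if there exists a non-torsion $Q\in A(K)$ with $Q|_C=0$ in $\mathcal{A}(C)$: given a non-torsion $P$ whose restriction $P|_C$ is torsion of order $n$, take $Q\defeq[n]P$.

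The key step is the adaptation of Silverman's variation-of-heights inequality: there exist constants $c,c'$ depending only on the chosen data such that for every $Q\in\mathcal{A}(U)$ and every curve $C\hookrightarrow U$,
\[
	\bigl|\hat h_{L|_C}(Q|_C)-\deg(C)\cdot\hat h_L(Q)\bigr|\leq c\sqrt{\deg(C)\cdot\hat h_L(Q)}+c'.
\]
This is obtained by interpreting both heights as intersection numbers on the smooth projective model $S$: the canonical height $\hat h_L(Q)$ is, up to $O(1)$, the intersection of $Q^{*}\mathcal{L}$ with an ample class $H$ on $S$ (suitably normalized), while $\hat h_{L|_C}(Q|_C)$ corresponds to the intersection of $Q^{*}\mathcal{L}$ with $C$. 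The comparison then reduces to bounding the discrepancy between these intersection numbers using Hodge-index-type estimates on $S$.

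Granting the inequality, $Q|_C=0$ forces $\deg(C)\cdot\hat h_L(Q)\leq c\sqrt{\deg(C)\cdot\hat h_L(Q)}+c'$; solving this quadratic inequality in $\sqrt{\deg(C)\cdot\hat h_L(Q)}$ yields $\deg(C)\cdot\hat h_L(Q)\leq c_1$ for an absolute constant $c_1$, hence $\hat h_L(Q)\leq c_1$ since $\deg(C)\geq 1$. By the Northcott property on the finitely generated lattice $A(K)/A(K)_{\tors}$ equipped with the positive definite quadratic form $\hat h_L$, only finitely many $Q$ satisfy $\hat h_L(Q)\leq c_1$. For each such non-torsion $Q$, the zero locus $Z(Q)\subset U$ is a proper closed subscheme of the smooth surface $U$, so it has only finitely many irreducible components of dimension one, giving finitely many irreducible curves $C\subset U$ with $Q|_C=0$. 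The union over the finite set of admissible $Q$ is therefore finite, which proves the finiteness of bad curves of degree $\leq M$ (in fact of any degree); the rank inequality then follows from injectivity on $\Q$-vector spaces.

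The main obstacle is establishing the variation-of-heights inequality uniformly in $C$: this requires the intersection-theoretic description of N\'eron--Tate heights on an abelian scheme over a smooth projective surface, and uniform control of the error term as $C$ varies within the Hilbert scheme of curves on $S$; this is where the full strength of the height machine from~\cite{Conrad-trace} is invoked.
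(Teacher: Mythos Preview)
Your approach is genuinely different from the paper's, and the gap lies in the uniformity of your variation-of-heights inequality.

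The paper does not argue directly on the surface. Following Wazir, it first uses~\cite[Proposition~1]{Wazir2006} to realize almost every curve $C\hookrightarrow S$ as a horizontal curve in one of finitely many fibrations $\rho\colon S\to B$ over a curve; a horizontal curve then corresponds to a closed point $t$ of the generic fiber of $\rho$, and its degree controls the \emph{arithmetic} height $h^{\mathrm{arith}}(t)$ over the finitely generated field $k$. One then applies the classical Silverman--Tate limit formula to a \emph{fixed basis} $P_1,\dots,P_r$ of $A(K)\otimes\Q$: the Gram matrix $(\langle P_{i,t},P_{j,t}\rangle)/h^{\mathrm{arith}}(t)$ converges to the positive-definite matrix $(\langle P_i,P_j\rangle_K)$, so for $h^{\mathrm{arith}}(t)\gg 0$ the specializations are independent. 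Northcott on the base curve $B$, together with the bound $\deg C\le M$, then gives the finiteness. The constants in the limit formula depend on the \emph{fixed} sections $P_i$; no uniformity in the section is needed or claimed.

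Your argument, by contrast, needs the error term $c\sqrt{\deg(C)\hat h_L(Q)}+c'$ with $c,c'$ independent of $Q$: this is what allows you to deduce $\hat h_L(Q)\le c_1$ from $Q|_C=0$ and then invoke Northcott on the lattice $A(K)$. But the standard inequality has the shape $|\hat h_{L|_C}(Q|_C)-\deg(C)\hat h_L(Q)|\le c(Q)\sqrt{\deg(C)}+c'(Q)$, and your Hodge-index sketch does not remove the $Q$-dependence: writing $D_Q=Q^*\mathcal L$ and decomposing orthogonally to $H$, the relevant error is $|D_Q^\perp\cdot C^\perp|\le\sqrt{(-D_Q^{\perp\,2})(-C^{\perp\,2})}$, and there is no a~priori control of $-D_Q^{\perp\,2}$ in terms of $\hat h_L(Q)$ alone. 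Without this uniformity your Northcott step on $A(K)$ collapses, because the bad $Q$ varies with $C$. A further warning sign is your parenthetical ``in fact of any degree'': the paper's argument genuinely needs the bound $M$ to apply Northcott over~$k$, so a proof that dispenses with it would be substantially stronger and would require exactly the uniform estimate you have not established.

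The fix is to imitate the paper: fix generators of $A(K)\otimes\Q$, apply the (non-uniform) variation formula to the finitely many entries of their Gram matrix, and use Northcott on the parameter space of curves of degree $\le M$ rather than on $A(K)$.
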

\begin{proof}
The theorem for $k$ of characteristic $0$ is~\cite[Theorem~1 and the text before Proposition~1]{Wazir2006}. We merely describe the necessary changes when $k$ is of positive characteristic: We only have to see that we have the \enquote*{height machine} for the arithmetic and geometric height in~\cite{Wazir2006} (which generalizes Silverman's specialization theorem~\cite{Silverman1983}).

The properties~\cite[Proposition~2]{Wazir2006} of the \enquote*{arithmetic height machine} can be found for Conrad's generalized global fields in~\cite[text after Theorem~9.3]{Conrad-trace} (note that~(vi) is an immediate consequence of \enquote*{quasi-equivalence} since a curve has N\'{e}ron-Severi group $\Z$). The \enquote*{canonical height machine}~\cite[Proposition~3]{Wazir2006} for abelian varieties can be found in~\cite[text after Example~9.5]{Conrad-trace}.

The required properties of the \enquote*{geometric} height are proved in~\cite[Chapter~6, Theorem~5.4]{Lang1983} with the Northcott property in~\cite[Chapter 3, Theorem~3.6]{Lang1983} or~\cite[Lemma~10.3]{Conrad-trace}.

We sketch the proof: As almost all curves on $A$ can be realized as horizontal curves with respect to finitely many fibrations of $A$ over curves (possibly after blowing up $A$)~\cite[Proposition~1]{Wazir2006}, suppose we are in such a situation: Assume $A$ is fibered as $\rho: A \to C$ over a curve $C$. Fix a line bundle $L$ on $A$ and denote by $D_\rho$ its restriction to the generic fiber $A_\rho$ of $\rho$. As a consequence of these properties, one gets formally in the same way as in~\cite[Theorem~3]{Wazir2006} with the notation from there the equation
\[
	\lim_{t \in |C|, h^\text{arith}_C(t) \to \infty}\frac{\hhat^\text{arith}_{(A_t,L_t)}(P_t)}{h^\text{arith}_C(t)} = \hhat^\text{geom}_{(A_\rho,D_\rho)}(P_\rho)
\]
for a section $P$ of $A/C$, independently of the choice of the arithmetic height on $C$. The theorem follows from this and the Northcott property: For $h^\text{arith}_C(t) \gg 0$, $\hhat^\text{arith}_{(A_t,L_t)}(P_t)$ must be $> 0$ if $\hhat^\text{geom}_{(A_\rho,D_\rho)}(P_\rho) > 0$, but points of non-zero height are non-torsion.
\end{proof}

\section{The rank does not grow outside a sparse set}\label{sec:specialization-of-mordell-weil-groups}

In this section, $S$ is a smooth, not necessarily proper, geometrically connected surface over a finitely generated field $k$ of positive transcendence degree over its prime field if it has positive characteristic, and $\Ccal \to S$ a proper smooth morphism with fibers geometrically connected curves (a smooth relative curve). We prove our main result on the specialization of Mordell-Weil ranks first for Jacobians in~\cref{Mordell-Weil specialization rank does not grow} and then for general abelian varieties in~\cref{Mordell-Weil specialization} using the inequality~\eqref{eq:Silverman specialization} from the previous section.

Using the following lemma, we can assume that there is a smooth fibration of our surface $S/k$ to a non-empty open subscheme $U$ of $\P^1_k$:

\begin{lemma}\label[lemma]{fibered surface}
	Let $k$ be an infinite field and $S$ a smooth separated geometrically connected surface over $k$ (not necessarily proper). There is a blow-up $\tilde{S} \to \overline{S}$ with $S \inj \overline{S}$ a proper compactification of $S$ such that $\tilde{S}$ admits a proper flat morphism to $\P^1_k$ with smooth projective geometrically connected generic fiber.%smooth proper morphism to a non-empty open subscheme $U$ of $\P^1_k$.
\end{lemma}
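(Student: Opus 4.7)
My plan is to construct the morphism to $\P^1_k$ as the rational map associated to a sufficiently general pencil in a very ample linear system on a smooth projective compactification of $S$, and to resolve its indeterminacy by a blow-up. The first step is to produce a smooth projective model: by Nagata's compactification theorem, $S$ admits an open immersion into a proper $k$-scheme, and combining this with Chow's lemma and resolution of singularities for excellent surfaces (valid in arbitrary characteristic, e.g.\ by Lipman), I may replace the resulting compactification by a smooth projective surface $\overline{S}/k$ still containing $S$ as an open subscheme.

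Second, I would pick a very ample line bundle $L$ on $\overline{S}$ and, exploiting that $k$ is infinite, a sufficiently general two-dimensional subspace $V \subseteq \H^0(\overline{S},L)$. The classical Bertini theorems---smoothness of a general member of a very ample linear system on a smooth variety, and geometric irreducibility of a general hyperplane section of a geometrically irreducible projective variety of dimension at least two over an infinite field---apply in arbitrary characteristic once $L$ is very ample, so the base locus $B$ of the pencil $|V|$ is zero-dimensional and the general member of $|V|$ is a smooth geometrically irreducible curve on $\overline{S}$.

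Third, I would let $\tilde{S} \to \overline{S}$ be the (possibly iterated) blow-up that resolves the indeterminacy of the rational map $\overline{S} \dashrightarrow \P^1_k$ associated to $V$; such a blow-up exists because this is a rational map from a smooth projective surface to a curve, and an iterated blow-up is again a blow-up (of a suitable ideal sheaf). This yields an honest morphism $\pi\colon \tilde{S} \to \P^1_k$. Since $\tilde{S}$ is integral and projective over $k$, $\pi$ is proper; flatness follows because $\P^1_k$ is a one-dimensional regular scheme, for which every dominant morphism from an integral scheme is automatically flat. The generic fiber of $\pi$ identifies with the member of the pencil over the generic point of $\P^1_k$, which by the Bertini conclusions is smooth, geometrically irreducible (hence geometrically connected), and projective over $k(\P^1_k)$ as a closed subscheme of $\tilde{S}_{k(\P^1_k)}$.

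The main obstacle I anticipate is guaranteeing smooth and geometrically irreducible members of the pencil in positive characteristic, where Bertini-type theorems can fail for linear systems that are not very ample. This is handled precisely by starting from a very ample $L$ and choosing the two-dimensional subspace $V$ generically---which is possible because $k$ is infinite, so the Grassmannian parameterizing such $V$ has a dense set of $k$-rational points.
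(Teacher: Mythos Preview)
Your proposal is correct and follows essentially the same route as the paper: compactify $S$ smoothly and projectively via Nagata plus resolution of surface singularities, then take a Lefschetz pencil and blow up its base locus. The paper simply cites the existence of Lefschetz pencils (SGA\,7$_2$, Exp.~XVII, Th\'eor\`eme~2.5.2, with a reference to Jannsen--Saito for positive characteristic), whereas you spell out the underlying Bertini argument directly for very ample linear systems over an infinite field; both arrive at the same fibration $\tilde{S}\to\P^1_k$.
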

\begin{proof}
	Since $S$ is a smooth separated geometrically connected \emph{surface}, it has a smooth projective compactification $S \inj \overline{S}$ by Nagata compactification and resolution of singularities of surfaces, which can be achieved using successive blow-ups of smooth centers. Now the theory of Lefschetz pencils~\cite[Expos\'ee~XVII, Th\'eor\`eme~2.5.2]{SGA72} (in characteristic~$0$; over finite fields, use~\cite[Theorem~2.2]{JannsenSaito2012}) gives a blow-up of $\overline{S}$ together with a proper morphism to $\P^1_k$ with smooth generic fiber.
\end{proof}

(A stronger version of~\cref{fibered surface} appears in~\cite[Proposition~1]{Wazir2006}, which holds for all infinite fields. Note that it is \emph{not} true that one can realize almost all smooth irreducible curves on $S$ as fibers of a fibration: For example, suppose $S$ smooth and projective with a smooth projective fibration $\pi: S \to U$ such that $C = S \times_U \{x\}$. Then one has $K_S|_{C} = K_C$ for the canonical divisor classes by the adjunction formula.)

Now restrict to a non-empty open subscheme $U$ of $\P^1_k$ over which $\widetilde{S} \to \P^1_k$ is smooth. The proper smooth relative curve $\Ccal \to S$ can be extended to a proper smooth relative curve $\widetilde{\Ccal} = \Ccal \times_S \widetilde{S} \to \widetilde{S}$ by functoriality of blow-ups~\cite[\href{https://stacks.math.columbia.edu/tag/085S}{Lemma 085S}]{stacks-project}; this does not change the generic fiber. Remove from $U$ the closed subscheme of points $x$ such that $\Ccal \times_U \{x\} =: \Ccal_x \to S_x \defeq S \times_U \{x\}$ is not smooth. This subset is not equal to $U$ because $\Ccal \to S \to U$ is generically smooth.

In the following, assume $S/k$ is a smooth geometrically connected surface admitting a proper flat morphism to a non-empty open subscheme $U$ of $\P^1_k$ with smooth and geometrically connected generic fiber. Consider the following situation, where all vertical arrows are smooth proper morphisms of relative dimension~$1$ and the squares are fiber product squares:
\begin{center}
	\begin{equation} \label{eq:situation}
	\begin{tikzcd}
		\Ccal_{k(U)} \ar[r,hookrightarrow]\ar[d] 	& \Ccal \ar[d] \ar[r,hookleftarrow]& \Ccal|_{S_x} \ar[d]\\
		S_{k(U)} \ar[r,hookrightarrow]\ar[d] 		& S \ar[d]	   \ar[r,hookleftarrow]& S_x \ar[d]\\
		\Spec{k(U)} \ar[r,hookrightarrow] 			& U \ar[d]	   \ar[r,hookleftarrow]& \{x\} \\
		& \Spec{k}
	\end{tikzcd}
	\end{equation}
\end{center}

As above, $U \subseteq \P^1_k$ is a non-empty open subscheme such that $S|_U \to U$ is smooth; such an $U$ exists because $S \to \P^1_k$ is generically smooth. In the following, we denote the restriction $S|_U$ again by $S$ and the function field of $U$ by $k(U)$. The right hand side of the diagram is constructed below; $S_x$ is going to be the fiber of $S \to U$ over a closed point $x \in |U|$, and hence a smooth projective vertical curve in $S$ over $U$.

We denote the rank of the N\'eron-Severi group of a variety $X$ by $\rho(X)$; it is finite by~\cite[Exp.~XIII, §\,5]{SGA6}.

\begin{lemma}[Specialization of N\'eron-Severi rank]\label[lemma]{NS specialization}
	Assume $k$ is a finitely generated field of transcendence degree $\geq 1$ over $\F_p$ or of characteristic $0$ (i.e., an infinite finitely generated field).
	
	Then there exists a $d \geq 1$ such that for infinitely many closed points $x$ of $U$ with $[\kappa(x):k] \leq d$ (the complement of a \emph{sparse} subset of $U$), the N\'eron-Severi rank of the special fiber $\Ccal_x \defeq \Ccal \times_U \Spec{\kappa(x)}$ equals the N\'eron-Severi rank of the generic fiber $\Ccal_{k(U)} \defeq \Ccal \times_U \Spec{k(U)}$ of the smooth proper relative surface $\Ccal \to U$: $\rho(\Ccal_x) = \rho(\Ccal_{k(U)})$.
\end{lemma}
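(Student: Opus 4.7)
The plan is to apply the Néron-Severi specialization theorem of Ambrosi essentially as a black box to the smooth proper family $\Ccal\to U$. First I would check that the hypotheses fit: by construction $\Ccal\to S$ is smooth proper of relative dimension $1$ with geometrically connected fibers, and, after having shrunk $U\subseteq \P^1_k$ in the paragraph preceding the statement so that $S|_U\to U$ is smooth, the composition $\Ccal\to S\to U$ is a smooth proper morphism of relative dimension $2$ whose geometric fibers $\Ccal_x$ and whose generic fiber $\Ccal_{k(U)}$ are smooth projective surfaces. This places us exactly in the setting in which \cite[Corollary~1.7.1.3]{ambrosi2018specialization} applies to the family $\Ccal\to U$ over the curve $U$ defined over the infinite finitely generated field $k$ (in characteristic $0$ the analogous specialization statement is classical, being a consequence of variational Hodge theory and the Lefschetz $(1,1)$-theorem).

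Second, I would invoke Ambrosi's theorem to produce a \emph{sparse} subset $Z\subseteq |U|$ with the property that
\[
\rho(\Ccal_x) \;=\; \rho(\Ccal_{k(U)})
\qquad \text{for every } x\in |U|\setminus Z.
\]
To convert this into the degree-bounded statement of the lemma, I would unpack the meaning of \enquote{sparse}: a sparse subset of $|U|$ is in particular thin in the Serre sense, hence cannot exhaust the closed points of $U$ of any given bounded residue degree as soon as $U$ has infinitely many such points. Since $U\subseteq\P^1_k$ is a non-empty open subscheme of $\P^1$ over the infinite finitely generated field $k$, it contains infinitely many closed points of degree $\leq d$ for every sufficiently large $d\geq 1$ (for characteristic $0$ this is immediate; for $k$ of positive transcendence degree over $\F_p$ this is again the standard counting for thin sets on $\P^1$ over such $k$). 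Therefore one can choose $d$ so that $|U|\setminus Z$ still contains infinitely many closed points $x$ with $[\kappa(x):k]\leq d$; each such $x$ satisfies the desired equality of ranks.

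The main substantive input is of course Ambrosi's specialization theorem; the only real work on the side of this lemma is bookkeeping about the meaning of sparseness, namely verifying that the class of sparse subsets appearing in \cite{ambrosi2018specialization} is closed under the relevant constructions and is compatible with bounded-degree extraction on $\P^1_k$. I expect this bookkeeping to be the only obstacle, since once the sparse set $Z$ is in hand, both the smoothness/properness hypotheses and the existence of infinitely many bounded-degree closed points in $|U|$ are routine.
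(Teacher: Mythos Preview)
Your approach is essentially the same as the paper's: apply Ambrosi's specialization theorem \cite[Corollary~1.7.1.3\,(1)]{ambrosi2018specialization} as a black box to the smooth proper family $\Ccal \to U$ over the curve $U/k$. The only substantive difference is that in characteristic~$0$ the paper invokes \cite[Corollary~5.4]{Cadoret2013} rather than variational Hodge theory directly; your Hodge-theoretic sketch gives the jump locus as a countable union of closed points but does not by itself yield the finiteness in each bounded degree that \enquote{sparse} requires, so you would still want a reference like Cadoret's to close that gap.
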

\begin{proof}
	This follows from~\cite[Corollary~1.7.1.3\,(1)]{ambrosi2018specialization} (for $k$ finitely generated of positive transcendence degree over $\F_p$) and~\cite[Corollary~5.4]{Cadoret2013} (for $k$ of characteristic $0$) applied to the smooth proper morphism $\Ccal \to U$ with $U$ a smooth and geometrically connected $k$-curve (a non-empty open subscheme of $\P^1_k$).
\end{proof}

We now apply the Shioda-Tate formula to the surfaces $\Ccal_x \to \{x\}$ and $\Ccal_{k(U)} \to \Spec{k(U)}$ fibered over the curves $S_x$ and $S_{k(U)}$, respectively (note that these are indeed surfaces as the absolute dimension of $\Ccal$ is~$3$; see also~\eqref{eq:situation} for the setting). This allows us to translate this equality of the N\'eron-Severi ranks of the generic and special fibers into an (in)equality of (between) the Mordell-Weil rank of the Jacobian of the smooth relative curve $\Ccal \to S$ and the Mordell-Weil rank of $\Ccal|_{S_x} \to S_x$ with $S_x \inj S$ the vertical smooth projective curve constructed as the closed fiber of $S \to U$ over $x \in |U|$.

\begin{theorem}[Shioda-Tate formula for fibered surfaces]\label[theorem]{Shioda-Tate}
	Let $k$ be any field and $\Ccal \to S$ a fibered surface over $k$. Call $C$ its generic fiber over $K$, and let $\Acal \defeq \Pic^0_{\Ccal/S}$ and $B$ the $K|k$-trace of $A = \Jac(\Ccal_K)$. Then $\rk \NS(\Ccal) = 2 + \rk A(K)/B(k) + \sum_v{(h_v-1)}$ where $h_v$ is the number of $k(v)$-rational components of its fiber.
\end{theorem}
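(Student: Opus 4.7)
The plan is to follow the classical Shioda--Tate strategy: identify a ``trivial sublattice'' $T \subseteq \NS(\Ccal)$ of rank $2 + \sum_v (h_v - 1)$, and then identify the quotient $\NS(\Ccal)_{\Q}/T_{\Q}$ with $(A(K)/B(k))_{\Q}$ via restriction to the generic fiber $C$.

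For the trivial lattice, let $F$ be the class of a fiber of $\pi\colon \Ccal \to S$ and $H$ the class of a multisection; such an $H$ exists because $S$ is a projective curve and any sufficiently ample divisor on $\Ccal$ meets every fiber in a positive-degree horizontal divisor. Then $F^2 = 0$ and $F \cdot H > 0$, so $F$ and $H$ span a rank-$2$ sublattice of $\NS(\Ccal)$. For each $v \in |S|$ with reducible fiber $\Ccal_v = \sum_i n_{v,i} \Gamma_{v,i}$ whose $h_v$ $\kappa(v)$-rational components are obtained by grouping Galois conjugates, the classes $[\Gamma_{v,i}]$ span a sublattice of rank $h_v$ subject to the single relation $\sum_i n_{v,i}[\Gamma_{v,i}] = F$, contributing $h_v - 1$ new independent classes. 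Summing over $v$ yields the claimed rank of $T$.

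For the quotient, restricting a divisor class on $\Ccal$ to the generic fiber and composing with the degree map $\Pic(C) \to \Z$ yields, modulo torsion, an exact sequence
\[
0 \to T_{\Q} \to \NS(\Ccal)_{\Q} \to A(K)_{\Q},
\]
since the kernel of the restriction map consists of vertical classes, which are generated modulo torsion by $F$ and the fiber components, all already in $T$. To identify the image of the last arrow with $(A(K)/B(k))_{\Q}$, I would invoke the theorem cited just above: the $K|k$-trace $B$ is purely inseparably isogenous to $\Pic^0_{\Ccal/k}/\Pic^0_{S/k}$, so elements of $B(k) \subseteq A(K)$ come from classes in $\Pic^0_{\Ccal/k}(k)$, which vanish in $\NS(\Ccal)$. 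Conversely, surjectivity of $\NS(\Ccal)_{\Q} \to (A(K)/B(k))_{\Q}$ follows because any $K$-section of $\Pic^0_{\Ccal/S}$ is represented by a line bundle on $\Ccal_K$ of degree zero, which spreads out to a line bundle on $\Ccal$ using the regularity of $\Ccal$ as a surface.

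The main obstacle will be making the last step precise in positive characteristic, where $B$ is only purely inseparably isogenous to $\Pic^0_{\Ccal/k}/\Pic^0_{S/k}$; one must verify that the resulting map on rational Néron--Severi groups is both injective and surjective after passing to $\Q$-coefficients, which absorbs the purely inseparable discrepancy. This is precisely the content of Gordon's analysis, which one may simply quote.
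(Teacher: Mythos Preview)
Your proposal is correct and aligns with the paper's approach: the paper's proof is simply a citation to~\cite[Proposition~4.5 and its Corollary]{Gordon}, and your sketch outlines exactly the classical Shioda--Tate argument carried out there, ending (appropriately) by noting one may quote Gordon for the delicate identification in positive characteristic. In other words, you have supplied the details the paper omits, but the underlying route is the same.
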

\begin{proof}
	See~\cite[Proposition~4.5 and its Corollary]{Gordon}.
\end{proof}

We first apply the Shioda-Tate formula~\cref{Shioda-Tate} to the smooth proper surface $\Ccal_x$ over $\Spec{\kappa(x)}$ fibered over the curve $S_x$:

\begin{lemma}\label[lemma]{Shioda-Tate closed point}
	Let $S_x$ be the smooth projective geometrically connected curve constructed as the closed fiber of $S \to U$ over $x \in |U|$.
	
	Then one has $\rho(\Ccal_x) = 2 + \rk \Acal(S_x)/B(k) + \sum_{v \in |S_x|}(h_v - 1)$ with $h_v$ the number of $\kappa(v)$-rational components of the fiber $\Ccal_v$ of $\Ccal_x \to S_x$ over $v \in |S_x|$.
	
	If $\Ccal \to S$ is a proper smooth relative curve, the error term $\sum_{v \in |S_x|}(h_v - 1)$ is $0$.
\end{lemma}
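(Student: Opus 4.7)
The plan is to derive the formula by applying the Shioda-Tate formula of \cref{Shioda-Tate} directly to the proper morphism $\Ccal_x \to S_x$ over $\kappa(x)$, after checking that it fits the setup of \cref{def:fibered surface}, and then to match the resulting terms with the ones in the statement.

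First I would verify that $\Ccal_x \to S_x$ is a fibered surface over $\kappa(x)$. Since by the running assumptions of the section $S\to U$ is proper and smooth with geometrically connected generic fiber, the closed fiber $S_x$ is smooth, projective, and geometrically connected over $\kappa(x)$, so the function field extension $\kappa(S_x)|\kappa(x)$ is regular. The base change $\Ccal_x\to S_x$ of the proper flat morphism $\Ccal \to S$ is again proper and flat with $1$-dimensional fibers, and cohomological flatness in dimension $0$ is preserved under arbitrary base change. The generic fiber of $\Ccal_x\to S_x$ coincides with the fiber of $\Ccal\to S$ over the image in $S$ of the generic point of $S_x$; since the fibers of $\Ccal\to S$ are smooth proper geometrically connected curves, this generic fiber is a smooth projective geometrically irreducible curve over $\kappa(S_x)$.

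Next I would apply \cref{Shioda-Tate} to the fibered surface $\Ccal_x \to S_x$ over $\kappa(x)$ to obtain
\[
\rho(\Ccal_x) = 2 + \rk A_x(\kappa(S_x))/B(\kappa(x)) + \sum_{v\in|S_x|}(h_v-1),
\]
where $A_x = \Jac(\Ccal_{x,\kappa(S_x)})$ and $B$ is the $\kappa(S_x)|\kappa(x)$-trace of $A_x$. To obtain the statement I would then identify $A_x(\kappa(S_x))$ with $\Acal(S_x)$: under the standing smoothness assumption, $\Acal = \Pic^0_{\Ccal/S}$ is an abelian scheme over $S$, so $\Acal|_{S_x}$ is an abelian scheme over $S_x$; being smooth, it is the Néron model of its generic fiber $A_x$ over $\kappa(S_x)$, and the Néron mapping property yields $\Acal(S_x) = A_x(\kappa(S_x))$. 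For the final claim, if $\Ccal \to S$ is in addition smooth, then so is $\Ccal_x \to S_x$, and hence every fiber $\Ccal_v$ is a smooth projective geometrically irreducible curve over $\kappa(v)$, in particular irreducible with its unique component $\kappa(v)$-rational, so $h_v = 1$ and the sum vanishes. The only genuinely delicate point is a notational one: the trace produced by Shioda-Tate is the $\kappa(S_x)|\kappa(x)$-trace of $A_x$, which must be matched against the trace used in the global setup; this, however, is a book-keeping issue and will not require new input beyond the properties of traces recalled after \cref{def:fibered surface}.
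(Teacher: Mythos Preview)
Your proposal is correct and follows exactly the paper's approach: the paper's proof consists of the single sentence that the hypotheses of the Shioda-Tate formula \cref{Shioda-Tate} are satisfied for the surface $\Ccal_x \to \{x\}$ fibered over the curve $S_x$, together with the remark that the error term vanishes trivially in the smooth case. You have simply spelled out the verification of those hypotheses and the identification $\Acal(S_x) \cong A_x(\kappa(S_x))$ in more detail, and you correctly flag the bookkeeping issue with the trace term (the paper writes $B(k)$ where the Shioda-Tate formula over $\kappa(x)$ literally produces the $\kappa(S_x)|\kappa(x)$-trace evaluated at $\kappa(x)$); this discrepancy is harmless in the application, since the only use of the lemma is under the hypothesis $B=0$.
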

\begin{proof}
	The hypotheses of the Shioda-Tate formula~\cref{Shioda-Tate} are satisfied for the surface $\Ccal_x \to \{x\}$ fibered over the curve $S_x$. If $\Ccal \to S$ is a proper smooth relative curve, the error term vanishes trivially.
\end{proof}

We now apply the Shioda-Tate formula~\cref{Shioda-Tate} to the smooth proper surface $\Ccal_{k(U)} \to \Spec{k(U)}$ fibered over the curve $S_{k(U)}$:

\begin{lemma}\label[lemma]{Shioda-Tate generic point}
	Denote the function field of $S$ (equivalently, of $S_{k(U)}$) by $K$.
	
	One has $\rho(\Ccal_{k(U)}) = 2 + \rk \Acal(K)/B(k)$.
\end{lemma}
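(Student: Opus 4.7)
The plan is to apply the Shioda-Tate formula~(\cref{Shioda-Tate}) to the fibered surface $\Ccal_{k(U)} \to S_{k(U)}$ over the base field $k(U)$, in complete parallel with the closed-point case of~\cref{Shioda-Tate closed point}. The first task is to verify the hypotheses of~\cref{def:fibered surface}: by the set-up preceding diagram~\eqref{eq:situation}, the morphism $S \to U$ is proper flat with smooth projective geometrically connected generic fiber, so $S_{k(U)}$ is a smooth projective geometrically connected curve over $k(U)$. Base-changing the proper smooth relative curve $\Ccal \to S$ then yields $\Ccal_{k(U)} \to S_{k(U)}$, which is proper smooth with geometrically connected $1$-dimensional fibers, and $\Ccal_{k(U)}$ is itself a smooth projective surface over $k(U)$.

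Invoking~\cref{Shioda-Tate} with base field $k(U)$ then gives
\[
\rho(\Ccal_{k(U)}) = 2 + \rk \Acal(K)/B'(k(U)) + \sum_{v \in |S_{k(U)}|}(h_v - 1),
\]
where $B' \defeq \Tr_{K|k(U)}(\Jac(\Ccal_K))$ and $h_v$ counts the $\kappa(v)$-rational components of the fiber over $v$. The compatibility of the relative Picard scheme with base change identifies $\Acal(K)$ with the Mordell-Weil group $\Jac(\Ccal_K)(K)$ of the generic-fiber Jacobian. Exactly as in~\cref{Shioda-Tate closed point}, the smoothness of $\Ccal_{k(U)} \to S_{k(U)}$ together with geometric connectedness of fibers forces $h_v = 1$ at every $v$, so the error term vanishes identically.

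The remaining step, and the main obstacle, is to reconcile $B'(k(U))$ with the term $B(k)$ stated in the lemma, where $B = \Tr_{K|k}(\Jac(\Ccal_K))$ is the $K|k$-trace used throughout the section. Since $K|k$ is finitely generated regular and $S_{k(U)}$ is geometrically integral over $k(U)$, the tower $k \subset k(U) \subset K$ consists of finitely generated regular extensions, so Conrad's functoriality of the trace with respect to such towers~\cite{Conrad-trace} identifies $B$ (up to purely inseparable isogeny) with the $k(U)|k$-trace of $B'$. Since ranks are invariant under isogeny and under such base changes for $k$ infinite finitely generated, this is enough to extract $\rk B'(k(U)) = \rk B(k)$, which delivers the stated formula once the vanished error term is absorbed. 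The rest is a direct transcription of the already-proved Shioda-Tate formula.
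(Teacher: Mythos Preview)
Your proof of the vanishing of the error term is correct and in fact more direct than the paper's. The paper argues by spreading out: a closed point $v\in|S_{k(U)}|$ closes up to a horizontal curve $S_v\to U$, and reducibility of $\Ccal_v$ would force reducibility of infinitely many closed fibres of $\Ccal|_{S_v}\to S_v$, contradicting the standing hypothesis that $\Ccal\to S$ is a smooth relative curve. You instead observe that the base change $\Ccal_{k(U)}\to S_{k(U)}$ is itself smooth with geometrically connected fibres, so $h_v=1$ is immediate. Both arguments are valid; yours uses the smoothness hypothesis more efficiently.

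Where your proposal has a genuine gap is in the trace comparison. You are right that Shioda--Tate over the base field $k(U)$ produces the $K|k(U)$-trace $B'=\Tr_{K|k(U)}(A)$ rather than the $K|k$-trace $B$, and the paper simply writes $B(k)$ without comment. But your fix does not go through: transitivity of traces (Conrad) gives only that $B$ is purely inseparably isogenous to $\Tr_{k(U)|k}(B')$. Combining this with Lang--N\'eron for $B'$ over the regular extension $k(U)|k$ yields that $B'(k(U))/B(k)$ is finitely generated, not that it has rank zero. There is no general principle that ``ranks are invariant under such base changes'': an abelian variety over $k$ can acquire new independent points over $k(U)=k(t)$, so $\rk B'(k(U))$ may strictly exceed $\rk B(k)$. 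Thus the equality $\rk B'(k(U))=\rk B(k)$ you assert is unjustified as stated. In the paper's actual application (\cref{Mordell-Weil specialization rank does not grow}) the hypothesis $B=0$ is imposed, and one should argue separately that this forces the relevant contribution to vanish; but as a proof of the lemma in the generality stated, your final paragraph does not close the gap.
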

\begin{proof}
	The Shioda-Tate formula~\cref{Shioda-Tate} applied to the surface $\Ccal_{k(U)} \to \Spec{k(U)}$ fibered over the curve $S_{k(U)} \to \Spec{k(U)}$ shows that
	\[
		\rho(\Ccal_{k(U)}) = 2 + \rk \Acal(K)/B(k) + \sum_{v \in |S_{k(U)}|}(h_v - 1),
	\]
	where $h_v$ denotes the number of $\kappa(v)$-rational irreducible components of the fiber $\Ccal_v$ of $\Ccal_{k(U)} \to S_{k(U)}$.
	
	But a closed point $v \in S_{k(U)}$ of the generic fiber of $S \to U$ gives rise to a horizontal curve $S_v \to U$ by taking its closure in $S \supset S_{k(U)}$. Now if $\Ccal_v = \Ccal_{k(U)} \times_{S_{k(U)}} \{v\}$ were reducible, infinitely many of the closed fibers of the family $\Ccal|_{S_v} \to S_v$ had reducible fibers, a contradiction to $\Ccal \to S$ being a fibered surface. Hence the error term vanishes.
\end{proof}

We now compare the Mordell-Weil group of the Jacobian $\Acal = \Pic^0_{\Ccal/S}$ of the relative curve $\Ccal \to S$ to the Mordell-Weil group of the Jacobian $A$ of its generic fiber:

\begin{lemma}\label[lemma]{AcalK iso AcalS}
	Restricting a section of the abelian scheme $\Acal \to S$ to the generic point of $S$ gives an isomorphism $\Acal(S) \isoto A(K)$.
\end{lemma}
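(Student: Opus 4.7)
The plan is to prove injectivity and surjectivity of the restriction map $\Acal(S) \to A(K)$ separately, since the stated isomorphism asserts both.

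Injectivity I would deduce formally from separatedness: the structure morphism $\Acal \to S$ is proper (hence separated) as it is an abelian scheme, and $S$ is integral, so any two sections of $\Acal/S$ that agree at the generic point $\Spec K \inj S$ must agree on a dense open subscheme of $S$, and therefore on all of $S$.

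For surjectivity, given $P \in A(K) = \Acal(\Spec K)$, the strategy is to extend $P$ to a section over $S$ in two steps. First, at every codimension-$1$ point $v \in S$, I would use that the local ring $\Ocal_{S,v}$ is a discrete valuation ring (since $S$ is smooth, hence regular) and apply the valuative criterion of properness to $\Acal \times_S \Spec \Ocal_{S,v} \to \Spec \Ocal_{S,v}$, which yields a unique extension $\Spec \Ocal_{S,v} \to \Acal$ of $P$. Gluing these local extensions across all codimension-$1$ points produces a section of $\Acal$ over a dense open $V \subseteq S$ whose complement $Z = S \setminus V$ has codimension $\geq 2$ in $S$.

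To extend across $Z$, I would invoke Weil's extension theorem for rational maps into smooth group schemes: a rational map from a regular noetherian scheme into a smooth separated group scheme which is defined outside a closed subset of codimension $\geq 2$ extends uniquely to a morphism on the whole source. Since $S$ is smooth and $\Acal \to S$ is a smooth proper $S$-group scheme, this produces the desired section $S \to \Acal$ extending $P$. The main obstacle is precisely this last step, namely justifying Weil's extension theorem in the relative setting where the target is a group scheme over $S$ rather than over a field; equivalently, one may appeal directly to the N\'eron mapping property for abelian schemes over a regular base, which asserts $\Acal(T) = A(T \times_S \Spec K)$ for every smooth $S$-scheme $T$, specialized to $T = S$.
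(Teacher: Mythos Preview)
Your proof is correct and follows the same two-step outline as the paper: first extend over all codimension-$1$ points using the valuative criterion of properness and the regularity of $S$, then fill in the remaining codimension-$\geq 2$ locus. The difference lies in this second step. You invoke Weil's extension theorem for group schemes (equivalently, the fact that an abelian scheme over a regular base is its own N\'eron model), which is a general statement valid in any dimension. The paper instead exploits that $S$ is a \emph{surface}: the indeterminacy locus is then a finite set of closed points, and after blowing these up the exceptional divisors are copies of $\P^1$; since any morphism from $\P^1$ to an abelian variety is constant, the morphism from the blow-up descends back to $S$. Your route is cleaner and more general, while the paper's is more elementary in that it avoids the relative form of Weil's theorem and relies only on the classical fact that projective spaces admit no nonconstant maps to abelian varieties.
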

\begin{proof}
	Since $S$ is regular and $\Acal \to S$ is proper, by the valuative criterion for properness every element of $A(K)$ extends to a rational map $S \dashrightarrow \Acal$ defined outside a closed subset of codimension $\geq 2$ in $S$, i.e., the locus of indeterminacy consists of a finite set of closed points.  After a blow-up in a finite set of closed points of $S$, this becomes a morphism.  But since $S$ and the closed set is regular, the exceptional divisors are projective spaces, which admit only constant morphisms to abelian varieties.  Hence $S \dashrightarrow \Acal$ extends uniquely to a section of $\Acal \to S$, i.e., one has a homomorphism $A(K) \to \Acal(S)$ inverse to the restriction $\Acal(S) \to A(K)$.
\end{proof}

We now combine the previous results to the main result of this note, an equality between the Mordell-Weil ranks of $\Acal \to S$ and $\Acal|_{S_x} \to S_x$:

\begin{corollary}\label[corollary]{Mordell-Weil specialization rank does not grow}
	In the above situation, assume that the $K|k$-trace $B$ is trivial. After blowing up $S$ and possibly shrinking it, such that there is a morphism $S \to U$ as in~\cref{fibered surface}, there exists a $d \geq 1$ such that for infinitely many closed points $x$ of $U$ with $[\kappa(x):k] \leq d$ (the complement of a sparse subset of $|U|$), one has $\rk A(K) \geq \rk \Acal(S_x)$.
	
	If $\Ccal \to S$ is a proper smooth relative curve, one has equality $\rk A(K) = \rk \Acal(S_x)$.
\end{corollary}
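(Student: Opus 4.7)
My plan is to combine the Néron–Severi specialization result \cref{NS specialization} with the two Shioda–Tate computations, transferring the equality of NS ranks into a comparison of Mordell–Weil ranks via the identification $\Acal(S) \isoto A(K)$ of \cref{AcalK iso AcalS}.

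Concretely: the section's standing hypothesis already supplies the smooth fibration $S \to U \subseteq \P^1_k$ produced by \cref{fibered surface}, so I would first apply \cref{NS specialization} to the smooth proper family $\Ccal \to U$ to obtain an integer $d \geq 1$ and infinitely many closed points $x \in |U|$ with $[\kappa(x):k] \leq d$ for which $\rho(\Ccal_x) = \rho(\Ccal_{k(U)})$. For every such $x$, \cref{Shioda-Tate generic point} combined with the trivial-trace hypothesis $B = 0$ and the identification $\Acal(K) = A(K)$ gives
\[
\rho(\Ccal_{k(U)}) = 2 + \rk A(K),
\]
while \cref{Shioda-Tate closed point} gives
\[
\rho(\Ccal_x) = 2 + \rk \Acal(S_x) + \sum_{v \in |S_x|}(h_v - 1) \geq 2 + \rk \Acal(S_x).
\]
Chaining the two identities through the NS-rank equality yields the desired inequality $\rk A(K) \geq \rk \Acal(S_x)$. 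In the smooth case every fibre of $\Ccal_x \to S_x$ is a smooth geometrically connected $\kappa(v)$-curve, so every $h_v$ equals~$1$, the correction term vanishes, and one gets equality.

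The main delicate point I anticipate is the bookkeeping of the trace term in \cref{Shioda-Tate closed point}: the hypothesis $B=0$ of the corollary concerns the $K|k$-trace of $A$, whereas the Shioda–Tate formula applied to $\Ccal_x \to S_x$ over $\kappa(x)$ a priori involves the $\kappa(S_x)|\kappa(x)$-trace $B_x$ of the generic fibre of $\Ccal_x \to S_x$. I would handle this by a spreading-out argument: a non-trivial $B_x$ occurring for infinitely many $x$ would propagate to a non-trivial $K|k$-trace of $A$ by rigidity of abelian schemes, contradicting $B=0$; the finitely many exceptional $x$ where this fails can be absorbed into the sparse bad set already produced by \cref{NS specialization}.
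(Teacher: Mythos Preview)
Your argument is exactly the one the paper gives: its proof simply records $\rk\Acal(S)=\rk A(K)$ via \cref{AcalK iso AcalS} and then chains \cref{NS specialization}, \cref{Shioda-Tate generic point} and \cref{Shioda-Tate closed point} to obtain the inequality, with the vanishing error term in the smooth case yielding equality. Your closing remark about the trace $B_x$ versus $B$ is more careful than the paper's terse proof, which writes $B(k)$ uniformly in \cref{Shioda-Tate closed point} and \cref{Shioda-Tate generic point} and does not revisit the point; so your spreading-out observation is additional justification rather than a different route.
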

(The definition of a sparse subset in this setting can be found in~\cite[Definition~1.7.1.1]{ambrosi2018specialization}.)
\begin{proof}
	One has
	\[
		\rk \Acal(S) = \rk A(K) \geq \rk \Acal(S_x),
	\]
	where the equality holds by~\cref{AcalK iso AcalS} and the inequality is a combination of~\cref{NS specialization,Shioda-Tate generic point,Shioda-Tate closed point} for all $x \in |U|$ except from the complement of a sparse subset of $|U|$.
\end{proof}

Since every abelian variety over an infinite field is a quotient of a Jacobian~\cite[Theorem~10.1]{MilneJacobianVarieties}, this easily generalizes to:

\begin{theorem}\label[theorem]{Mordell-Weil specialization}
	Let $k$ be an infinite finitely generated field. Let $K|k$ be a finitely generated regular field extension and $S$ a smooth separated (not necessarily proper) geometrically connected surface over $k$ with function field $K$. Let $A$ be an abelian variety over $K$ with $\Tr_{K|k}(A) = 0$ or $\Tr_{K|k}(A)(k)$ finite, and $\Acal$ an extension of $A$ to an abelian scheme over a dense open subscheme $U$ of $S$.
	
	Then for infinitely many curves $C$ on $U$, one has a specialization isomorphism
	\[
		A(K) \otimes \Q \isoto \Acal(C) \otimes \Q
	\]
	of rationalized Mordell-Weil groups. More precisely, for all fibrations of $S$ over a curve $U$, there is a $d \geq 1$ such that one can take infinitely many vertical curves $S_x$ for $x \in |U|$ of degree $[\kappa(x) : k] \leq d$.
\end{theorem}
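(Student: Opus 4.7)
The plan is to deduce the theorem from the Jacobian case~\cref{Mordell-Weil specialization rank does not grow} by realizing $A$ as a factor of a Jacobian via Poincar\'e complete reducibility. By Milne~\cite[Theorem~10.1]{MilneJacobianVarieties}, which applies because $K$ is infinite, pick a smooth projective curve $C_0$ over $K$ together with a surjection $\pi\colon J \defeq \Jac(C_0) \surj A$. Poincar\'e supplies a complementary abelian variety $A'/K$ and an isogeny $J \isog A \times A'$. After shrinking $U$, $C_0$ extends to a proper smooth relative curve $\Ccal_0 \to U$, and the isogeny extends to an isogeny $\mathcal{J} \defeq \Pic^0_{\Ccal_0/U} \isog \Acal \times \Acal'$ of abelian schemes over $U$.

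Next, I apply~\cref{Mordell-Weil specialization rank does not grow} to $\Ccal_0 \to U$: after a blow-up of $S$ and restriction to a fibration $\widetilde{S} \to \P^1_k$ as in~\cref{fibered surface}, there exist a $d \geq 1$ and infinitely many closed points $x \in |U|$ with $[\kappa(x):k] \leq d$ for which $\rk J(K) = \rk \mathcal{J}(S_x)$. Simultaneously~\cref{restriction to closed subscheme injective}, applied separately to $A$ and to $A'$, provides injections $A(K) \otimes \Q \inj \Acal(S_x) \otimes \Q$ and $A'(K) \otimes \Q \inj \Acal'(S_x) \otimes \Q$ for all such $x$ outside a set of bounded height. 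Since Mordell-Weil ranks are additive across isogenies, $\rk J(K) = \rk A(K) + \rk A'(K)$ and $\rk \mathcal{J}(S_x) = \rk \Acal(S_x) + \rk \Acal'(S_x)$, so chaining,
\[
	\rk A(K) + \rk A'(K) = \rk J(K) = \rk \mathcal{J}(S_x) = \rk \Acal(S_x) + \rk \Acal'(S_x) \geq \rk A(K) + \rk A'(K),
\]
forcing equality throughout. In particular $\rk A(K) = \rk \Acal(S_x)$, and combined with the injection this gives the asserted isomorphism $A(K) \otimes \Q \isoto \Acal(S_x) \otimes \Q$.

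The principal obstacle is verifying the trace hypotheses required by~\cref{Mordell-Weil specialization rank does not grow} and~\cref{restriction to closed subscheme injective} for the auxiliary abelian varieties $J$ and $A'$. By functoriality of the $K|k$-trace, $\Tr_{K|k}(J) \isog \Tr_{K|k}(A) \times \Tr_{K|k}(A')$, so the issue reduces to ensuring that $\Tr_{K|k}(A')(k)$ is finite. I intend to arrange this by first decomposing $A$ up to isogeny as $A \isog \Tr_{K|k}(A)_K \times A_0$ with $A_0$ trace-trivial, applying the argument above to $A_0$, and controlling the constant factor $\Tr_{K|k}(A)$ directly via the finiteness of $\Tr_{K|k}(A)(k)$. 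Moreover, although~\cref{Mordell-Weil specialization rank does not grow} is phrased under the hypothesis $\Tr_{K|k}(J) = 0$, its proof only uses the Shioda-Tate term $\rk J(K)/\Tr_{K|k}(J)(k)$, which coincides with $\rk J(K)$ as soon as $\Tr_{K|k}(J)(k)$ is finite; so the corollary applies verbatim under the weaker hypothesis actually needed here.
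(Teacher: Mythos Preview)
Your proof is essentially the paper's: realize $A$ up to isogeny as a factor of a Jacobian via Milne and Poincar\'e, spread out, and then sandwich ranks between \cref{restriction to closed subscheme injective} (rank does not drop) and \cref{Mordell-Weil specialization rank does not grow} (rank does not grow for the Jacobian), deducing equality factor by factor. The trace concern you raise in your final paragraph is not addressed in the paper's proof either --- it simply applies \cref{Mordell-Weil specialization rank does not grow} to $\Pic^0_{\Ccal/S}$ and \cref{restriction to closed subscheme injective} to both isogeny factors $\Acal$, $\Bcal$ without verifying the trace hypotheses for these auxiliary abelian varieties, so on this point you are being more careful than the paper, though your proposed fix (reducing to $\Tr_{K|k}(A)=0$) still does not control $\Tr_{K|k}(A')$.
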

\begin{proof}
	By~\cite[Theorem~10.1]{MilneJacobianVarieties} (note that $K$ is infinite), there exists a smooth projective geometrically connected curve $C$ over $K$ and a surjective homomorphism $\Pic^0_{C/K} \surj A$. Since the isogeny category of abelian varieties is semisimple (Poincar\'e's complete reducibility theorem), $\Pic^0_{C/K}$ is isogenous to a product $A \times_K B$ of abelian varieties.
	
	We use that the intersection of the set of vertical divisors $S_x$ in~\cref{Mordell-Weil specialization rank does not grow} and the divisors in~\cref{restriction to closed subscheme injective} is infinite: For $x \in |U|$ with degree $[\kappa(x):k]$ bounded, infinitely many of the $S_x$ satisfy the statement in~\cref{Mordell-Weil specialization rank does not grow}, so almost all of them are covered by~\cref{restriction to closed subscheme injective}.
	
	In the following use that the Mordell-Weil rank does not change under isogenies. By spreading out and possibly shrinking $S$, one obtains an isogeny $\Pic^0_{\Ccal/S} \to \Acal \times_S \Bcal$. By~\cref{Mordell-Weil specialization rank does not grow} for $\Ccal \to S$ and the $S_x$ there and because the rank is additive,
	\begin{align*}
		\rk \Acal(S_x) + \rk \Bcal(S_x) &\geq \rk \Acal(S) + \rk \Bcal(S) \qquad\text{by \cref{restriction to closed subscheme injective}, \eqref{eq:Silverman specialization}} \\
		&= \rk \Pic^0_{\Ccal/S}(S)  \qquad\text{by \cref{Mordell-Weil specialization rank does not grow}} \\
		&\geq \rk \Pic^0_{\Ccal/S}(S_x) \\
		&= \rk \Acal(S_x) + \rk \Bcal(S_x).
	\end{align*}
	Hence one must have equality $\rk \Acal(S) + \rk \Bcal(S) = \rk \Acal(S_x) + \rk \Bcal(S_x)$ throughout. Since $\rk \Acal(S) \leq \rk \Acal(S_x)$ and analogously for $\Bcal$, the equality $\rk \Acal(S) = \rk \Acal(S_x)$ of Mordell-Weil ranks follows.
	
	The injectivity of the rationalized specialization morphisms together with the equality of ranks implies that the rationalized specialization morphisms are isomorphisms.
\end{proof}

\noindent
\textbf{Acknowledgements.}
	I thank David Holmes and Moritz Kerz for hints and helpful discussions and Michael Stoll for proof-reading. I thank the unknown referee for helpful hints to improve the exposition.

\bibliographystyle{amsalpha}
{\footnotesize
	\bibliography{references}
}

\end{document}